\newtheorem{theorem}[subsection]{Theorem}
\newtheorem{lemma}[subsection]{Lemma}
\newtheorem{corollary}[subsection]{Corollary}
\newtheorem{conjecture}[subsection]{Conjecture}
\newtheorem{proposition}[subsection]{Proposition}
\newcommand{\curleq}{\preccurlyeq}
\newcommand{\diag}{{\rm diag}}
\newcommand{\id}{{\rm id}}
\newcommand{\inv}{^{-1}}
\newcommand{\matvier}[1]{\left(\begin{array}{rrrr}#1\end{array} \right)}
\renewcommand{\phi}{\varphi}
\newcommand{\sca}[2]{\left\langle #1, #2 \right\rangle}
\newcommand{\spa}{{\rm span}}
\newcommand{\vect}[1]{\left(\begin{array}{r}#1\end{array} \right)}
\newcommand{\cdop}{{\mathbb C}}
\newcommand{\edop}{{\mathbb E}}
\newcommand{\fdop}{{\mathbb F}}
\newcommand{\ndop}{{\mathbb N}}
\newcommand{\rdop}{{\mathbb R}}
\newcommand{\zdop}{{\mathbb Z}}
\newcommand{\Bcal}{{\mathcal B}}
\author{Juan Marcos Cervi\~no}
\address{FB Mathematik, University Duisburg-Essen,
45117 Essen, Germany}
\email{juan.cervino@uni-due.de}
\author{Georg Hein}
\address{FB Mathematik, University Duisburg-Essen,
45117 Essen, Germany}
\email{georg.hein@uni-due.de}
\date{October 12, 2009}
\begin{document}
\subjclass[2000]{11F11, 11F27, 11E45}
\keywords{Conway-Sloane conjecture, lattice invariants}
\title{The Conway-Sloane tetralattice pairs are non-isometric}
\begin{abstract}
Conway and Sloane constructed a $4$-parameter family of pairs of
isospectral lattices of rank four. They conjectured that all pairs in
their family are non-isometric, whenever the parameters are pairwise
different, and verified this for classical integral lattices 
of determinant up to $10^4$. In this paper, we use our theory of
lattice invariants developed in \cite{CH1} and \cite{CH2} to
prove this conjecture.
\end{abstract}
\maketitle
\section{Introduction}

The isometry classes of unary, binary and ternary positive definite
quadratic forms are determined by the representation numbers. That this
fact does not hold in any dimension, was shown by E.~Witt's example of
two non-isometric, positive definite quadratic forms in dimension
$16$ with the same representation numbers.

\vspace*{.5em}
If two positive definite quadratic forms have the same representation
numbers, then we call them {\sl isospectral}. A.~Schiemann conducted a
computer search to provide an
example of two isospectral positive definite quaternary quadratic forms
with integer coefficients which are not isometric (see \cite{Sch}).
Hence, already in rank 4, the {\em theta series}, which is the
generating series for the representation numbers, does not determine the
isometry class.

\vspace*{.5em}
In \cite{CS}, Conway and Sloane introduced a real $4$-parameter family of
pairs of isospectral lattices in the euclidean space $\mathbb{E}^4$, where Schiemann's
example is a member of. They conjectured
that the lattice pairs are non-isometric
whenever the parameter coordinates are pairwise different.
They verified this for lattice pairs
corresponding to classical integral quadratic forms of
discriminant less than $10^4$.

\vspace*{.5em}
In this article we prove the conjecture of Conway and Sloane using
our theory of lattice invariants introduced in \cite{CH1} and
\cite{CH2}. More precisely, for each tuple $(m_1,\ldots,m_k)$ of natural numbers, we associate
in \cite{CH2} a lattice invariant $\Theta_{m_1,\ldots,m_k}$.
It is an analytic function on the upper half plane, which gives a
modular form for integral lattices.
For example, $\Theta_0$ is the classical theta series of the lattice.
In \cite[Proposition 4.4]{CH1}, we showed that for Schiemann's example
the invariants $\Theta_{1,1}$ are different, hence they are not
isometric.

\vspace*{.5em}
One observes that
the function $\Theta_{1,1}$ is analytic in the four parameter
coordinates of the Conway-Sloane family.
This implies the Conway-Sloane conjecture on a dense open
subset of the parameter domain. Motivated by this observation, we
started a thorough investigation of the invariant
$\Theta_{1,1}$ for the lattice pairs in the Conway-Sloane
family. We show that for each pair the functions $\Theta_{1,1}$ are not
equal, provided that the parameter coordinates are
pairwise different -- and so proving the full conjecture of Conway and Sloane in
\cite[Remark (v)]{CS}.

\vspace*{.5em}
The invariant $\Theta_{1,1}$ enables us to give the first example of
non-isometric, isospectral lattices varying in a continuous family. So
far, there were used only ad-hoc methods for proving non-isometry of
isospectral lattices -- which usually can not be extended to such
families with real parameters.

\vspace*{.5em}
In Section \ref{TETRA} we start with an alternative description of the
lattice pair $(L_1,L_2)$ of Conway and Sloane. 
For this, we use an action of the
Kleinian four group on the self-dual codes in $\fdop_3^4$. This
construction explains the term {\em tetralattice}, as already introduced
in \cite{Con}. We repeat
the definition of the invariants $\Theta_{1,1}(\tau,L_i)$ in Section
\ref{DELTA}. Furthermore, we develop an explicit formula for the
$q$-expansion of
$\delta(\tau)=\frac{1}{128}(\Theta_{1,1}(\tau,L_1)-\Theta_{1,1}(\tau,L_2))$.
In the next section we determine those vectors contributing to the first
coefficient of the $q$-expansion of $\delta$.
Finally, we prove our main result, Theorem \ref{main}, by computing this
coefficient which turns out to be negative.
Using our lattice invariant $\Theta_{1,1}$ this result reduces, in the
end, to a simple computation.

\subsection*{Notation}
In this article, $\mathbb{E}^n$ denotes the euclidean $n$-dimensional vector space
with inner product $\langle\cdot,\cdot\rangle$. For any $v\in\mathbb{E}^n$,
$\|v\|^2 =\langle v,v\rangle$ is called the {\sl square norm} of $v$.

\section{The isospectral family of Conway and Sloane}\label{TETRA}
\subsection{A lattice with an action of the Kleinian group $K_4$}
We start with a lattice $L\cong \zdop^4$ together with its Gram
matrix
\[ G_L = \matvier{r & \alpha & \beta & \gamma\\\alpha & r & -\gamma &
-\beta\\\beta&-\gamma&r&-\alpha\\\gamma&-\beta&-\alpha&r} \, . \] 
We see that the Kleinian four group $K_4$
acts on $L$ as isometries when given as:
\[ K_4= \left\{ g_0=\id,
g_1=\matvier{0&0&1&0\\0&0&0&-1\\1&0&0&0\\0&-1&0&0}, \,
g_2=\matvier{0&1&0&0\\1&0&0&0\\0&0&0&-1\\0&0&-1&0}, \, g_3=g_2 \cdot g_1
\right\} . \]

\subsection{Sublattices of $L$ from ternary codes}\label{sub-lat}
Using the above identification $L \cong \zdop^4$ we obtain an
isomorphism $L/3L \cong \fdop_3^4$, and a surjection $\pi:L \to
\fdop_3^4$.
For each linear subspace $C \subset \fdop^4_3$ we obtain a sublattice
$L_C:=\pi\inv(C)$ of $L$ containing $3L$. 
Linear subspaces of $\fdop^4_3$ are called ternary codes. When we
speak of a code $C$, we always mean a code $C \subset \fdop^4_3$.
Since the above action of $K_4$ on $L$ maps $3L$ to $3L$, we obtain an
action of $K_4$ on $\fdop_3^4$.

If two linear codes $C$ and $C'$ differ by an element $g \in K_4$,
that is $C=g(C')$, then $L_{C'}$ and $L_C$ are isometric because the
elements of $K_4$ are isometries. On $\fdop^4_3$ we consider the non
degenerate standard scalar product $\langle\,,\rangle: \fdop^4_3 \times \fdop^4_3
\to \fdop_3$. One easily verifies that the action of $K_4$ on
$\fdop_3^4$ preserves this bilinear form.
A code $C$ is called self-dual when $C$ is of dimension 2, and
$\sca{c}{c'}=0$ for all $c,c' \in C$.
A straightforward calculation shows that there are exactly eight
self-dual codes. Here is the complete list:
\[\begin{array}{ll}
C_1  =  \spa\{ (1,0,-1,-1)^t,(0,1,+1,-1)^t \} &
C_2  =  \spa\{ (1,0,-1,+1)^t,(0,1,+1,+1)^t \}  \\
C_3  =  \spa\{ (1,0,-1,+1)^t,(0,1,-1,-1)^t \}  &
C_4  =  \spa\{ (1,0,+1,+1)^t,(0,1,+1,-1)^t \} \\
C_5  =  \spa\{ (1,0,+1,-1)^t,(0,1,+1,+1)^t \} &
C_6  =  \spa\{ (1,0,-1,-1)^t,(0,1,-1,+1)^t \} \\
C_7  =  \spa\{ (1,0,+1,+1)^t,(0,1,-1,+1)^t \}  &
C_8  =  \spa\{ (1,0,+1,-1)^t,(0,1,-1,-1)^t \} .\\
\end{array} \]
The action of $K_4$ on the set $\{ C_i \}_{i=1 \dots 8}$ of self-dual
codes has two orbits, namely $\{C_1,C_3,C_5,C_7\}$, and
$\{C_2,C_4,C_6,C_8\}$. There is another description of the partition of
the set $\{ C_i \}_{i=1 \dots 8}$. To see it, we draw the graph $\Gamma$
with vertices the self-dual codes. We connect two vertices $C_i$ and
$C_j$ when $\dim(C_i \cap C_j)=1$. We obtain the following picture.
\[ \xymatrix{
*++[o][F-]{C_1} \ar@{-}[d] \ar@{-}[rd] \ar@{-}[rrd] \ar@{-}[rrrd] &
*++[o][F-]{C_3} \ar@{-}[d] \ar@{-}[rd] \ar@{-}[rrd] \ar@{-}[ld] &
*++[o][F-]{C_5} \ar@{-}[d] \ar@{-}[rd] \ar@{-}[lld] \ar@{-}[ld] &
*++[o][F-]{C_7} \ar@{-}[d] \ar@{-}[ld] \ar@{-}[lld] \ar@{-}[llld] \\
*++[o][F-]{C_2} &
*++[o][F-]{C_4} &
*++[o][F-]{C_6} &
*++[o][F-]{C_8} 
} \]
Thus, $\Gamma$ is the complete bipartite graph of type $(4,4)$. The
partition of the vertices is the above orbit partition.
\subsection{The codes $C_1$ and $C_2$}\label{c1c2}
We write down the codes $C_1$, and $C_2$ explicitly as
\[ C_1=\{0, \pm [v_0], \pm [v_1], \pm [v_2], \pm [v_3]\} , \mbox{ and }
C_2=\{0, \pm [w_0], \pm [w_1], \pm [w_2], \pm [w_3]\} \]
with
\[
v_0 = \matvier{1\\-1\\1\\0}, \,
v_1 = \matvier{0\\1\\1\\-1}, \,
v_2 = \matvier{-1\\0\\1\\1}, \,
v_3 = \matvier{-1\\-1\\0\\-1}, \, \mbox{ and }
\]
\[
w_0 = \matvier{1\\-1\\1\\0}, \,
w_1 = \matvier{1\\1\\0\\-1}, \,
w_2 = \matvier{0\\-1\\-1\\-1}, \,
w_3 = \matvier{1\\0\\-1\\1}.  \qquad 
\]
We observe that for each $v \in C_1$ different from zero there exists
exactly one $g \in K_4$ such that $g(v) \in C_2$. We arranged the
notation in such a way that $g_i(v_i)=w_i$, and $g_i(w_i)=v_i$ for all
$i=0,\ldots,3$.
\subsection{The isospectral lattices $L_1$ and $L_2$}\label{iso-spec}
We obtain two lattices $L_1=\pi\inv(C_1)$ and $L_2=\pi\inv(C_2)$.
Both are sublattices of $L$ of index 9 which contain $3L$. We show that
$L_1$ and $L_2$ have the same length spectra.
Any vector $l \in L_1$ has a unique form $l=3l_1 + c_1$ with $l_1 \in L$
and $c_1 \in C$. Using this decomposition we give a map $\Psi:L_1 \to
L_2$ by
\[ \Psi(3l_1) = 3l_1 \mbox{ , and } \Psi(3l_1 \pm v_i) = g_i(3l_1 \pm
v_i) = 3g_i(l_1) \pm w_i \, . \] 
It is easy to write down the inverse $\Phi:L_2 \to L_1$ of $\Psi$
following the same recipe:
\[ \Phi(3l_2) = 3l_2 \mbox{ , and } \Phi(3l_2 \pm w_i) = g_i(3l_2 \pm
w_i) = 3g_i(l_2) \pm v_i . \]
Since $K_4$ acts by isometries the lengths of $l \in L_1$ and
$\Psi(l) \in L_2$ coincide.
The bijection $\Psi$ is not linear.

\subsection{A new basis}
We consider the four vectors 
\[
u_0 =  \frac{1}{4}  \matvier{-1\\1\\1\\1}, \,
u_1 =  \frac{1}{4}\matvier{1\\-1\\1\\1}, \,
u_2 =  \frac{1}{4}\matvier{1\\1\\-1\\1}, \,
u_3 =  \frac{1}{4}\matvier{1\\1\\1\\-1}.
\]
These are common eigenvectors for the action of $K_4$ on $\rdop^4=\rdop
\otimes L$.
Indeed, with respect to this basis the action of $g_1$ is given by the
diagonal matrix $\diag(-1,1,-1,1)$, and the action of $g_2$
corresponds to $\diag(-1,-1,1,1)$. The Gram matrix with respect to
$\Bcal=\{u_0,u_1,u_2,u_3\}$  is given by
\[G_\Bcal =
\matvier{a&0&0&0\\
0&b&0&0\\
0&0&c&0\\
0&0&0&d}
\quad \mbox{ with } \quad
\begin{array}{rcl}
a&=& \frac{1}{4} (r-\alpha-\beta-\gamma),\\
b&=& \frac{1}{4} (r-\alpha+\beta+\gamma),\\
c&=& \frac{1}{4} (r+\alpha-\beta+\gamma),\\
d&=& \frac{1}{4} (r+\alpha+\beta-\gamma).\\
\end{array} \]
Taking as lattice basis of $L$ the column vectors of the matrix
\[ \left( \begin{array}{rrrr} 1&0&1&1\\-1&1&1&0\\1&1&0&1\\
0&-1&-1&-1 \end{array} \right) \]
with respect to the standard basis. We obtain as generators
with respect to the basis $\Bcal$ the column vectors of
\[ \left( \begin{array}{rrrr}
-1&1&-1&-1\\3&-1&-1&1\\-1&-1&1&-1\\1&3&3&3\end{array} \right) .\]

Denoting these lattice vectors by $l_0$, $l_1$, $l_2$ and $l_3$, then
$L_1$ is given by $L_1=\spa\{ l_0,l_1,3l_2,3l_3 \}$, and $L_2$ can be
described as $L_2=\spa\{ l_0,3l_1,l_2,3l_3 \}$.
From this description it is obvious that both lattices contain the
lattice $L_{12}=L_1 \cap L_2 = \spa\{ l_0,3l_1,3l_2,3l_3 \}$ as a sublattices
of index three.
\subsection{Conway and Sloane's description of $L_1$ and
$L_2$}\label{CoSl}
Performing elementary operations with column vectors, we see
that $L_2$ is generated by the columns of the matrix
\[ \matvier{-3&1&1&1\\-1&-3&-1&1\\-1&1&-3&-1\\-1&-1&1&-3\\} \]
with respect to the basis $\Bcal$.
This is the original definition of the lattice $L^-$ in \cite{CS}.
For $L_1$ we find that its lattice generators with respect to $\Bcal$
are the columns of the matrix
\[ \matvier{3&1&1&1\\1&-3&1&-1\\-1&1&3&-1\\-1&-1&1&3\\} . \]
Up to the diagonal matrix $\diag(1,-1,1,1)$ which is an isometry with respect
to the orthogonal basis $\Bcal$ this gives the
lattice $L^+$ in \cite{CS}.
We prefer the presented form to the one of Conway and Sloane.
In our form both lattices contain the same index nine lattice $M=3L$
spanned by the four vectors
\[ 
m_0=\vect{-3\\3\\3\\3}, \, 
m_1=\vect{3\\-3\\3\\3} , \, 
m_2=\vect{3\\3\\-3\\3} , \mbox{ and }
m_3=\vect{3\\3\\3\\-3} . \]
\subsection{The conjecture of Conway and Sloane}
The lattices $L_1$ and $L_2$ (respectively $L^+$ and $L^-$) depend on the
real numbers $a$, $b$, $c$, and $d$. To express this dependence we write
$L_{1;a,b,c,d}$ and $L_{2;a,b,c,d}$. Considering a large (but finite)
number of examples Conway and Sloane formulated the following
\begin{conjecture} 
For all real numbers $(a,b,c,d) \in \rdop^4$ subject to the condition
$0 <a<b<c<d$ the lattices $L_{1;a,b,c,d}$ and $L_{2;a,b,c,d}$ are isospectral
but not isomorphic.
\end{conjecture}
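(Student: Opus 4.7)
The isospectrality half of the statement is already essentially in hand: the bijection $\Psi:L_1\to L_2$ constructed in \ref{iso-spec} carries each coset $3L+v_i$ of $L_1$ to the coset $3L+w_i$ of $L_2$ via the isometry $g_i\in K_4$, so $\|\ell\|=\|\Psi(\ell)\|$ for every $\ell\in L_1$. Hence $L_1$ and $L_2$ have identical length spectra for every choice of $(a,b,c,d)$, and only the non-isometry assertion requires work.

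For the non-isometry half, the plan is to invoke the lattice invariant $\Theta_{1,1}$ from \cite{CH2}. Because it depends only on the isometry class, it suffices to prove that the analytic function
\[
\delta(\tau)=\tfrac{1}{128}\bigl(\Theta_{1,1}(\tau,L_{1;a,b,c,d})-\Theta_{1,1}(\tau,L_{2;a,b,c,d})\bigr)
\]
is not identically zero whenever the parameters are pairwise distinct. Both $L_1$ and $L_2$ contain the common sublattice $M=3L$ of index nine (cf.\ \ref{CoSl}). Partitioning each $L_i$ into its nine $M$-cosets, the contributions to $\Theta_{1,1}(\tau,L_i)$ coming from vectors in $M$ are identical on both sides and cancel in the difference. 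Thus $\delta$ admits an explicit $q$-expansion as a sum with signs over the vectors of $(L_1\setminus M)\cup(L_2\setminus M)$, which is precisely the formula to be derived in Section \ref{DELTA}.

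The strategy is then to isolate the lowest nonvanishing coefficient of this $q$-expansion and show that, as a function of $a,b,c,d$, it is strictly negative under the hypothesis $0<a<b<c<d$. Using the explicit generators of $L_1$ and $L_2$ with respect to the orthogonal basis $\Bcal$ from \ref{CoSl}, the short vectors in $L_i\setminus M$ can be enumerated and their square norms expressed as nonnegative integer combinations of $a,b,c,d$. After ranking these norms in lexicographic order relative to $(a,b,c,d)$ and recording the harmonic-type weight with which each vector enters $\Theta_{1,1}$, one obtains a closed expression for the leading coefficient of $\delta$ as a finite signed sum.

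The main obstacle is the bookkeeping in this last step: identifying which minimal contributors survive the cancellation between the two lattices, and then recognising that the residual sum is sign-definite. The ordering $0<a<b<c<d$ will be what pins down the sign, as it distinguishes the $\Bcal$-axes and so prevents accidental cancellations among the surviving short vectors. Once the minimal coefficient is shown to be nonzero, $\delta(\tau)\not\equiv 0$ follows, and so $L_{1;a,b,c,d}$ and $L_{2;a,b,c,d}$ cannot be isometric, proving the conjecture.
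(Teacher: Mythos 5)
Your outline follows the paper's own strategy step for step (isospectrality via $\Psi$, non-isometry via the invariant $\Theta_{1,1}$ and the lowest coefficient of the $q$-expansion of $\delta$), but it stops exactly where the proof begins. Everything after ``the strategy is then to isolate the lowest nonvanishing coefficient'' is deferred, and you yourself flag the decisive step as ``the main obstacle.'' That step is the entire content of Sections \ref{MIN} and \ref{main}: one must (i) determine the minimal vectors in each class $[v_i]$ of $L_1/M$, (ii) determine the minimal \emph{pairs}, and (iii) compute the weights. The outcome is not a single minimal pair but two incomparable ones, $(v_0,v_2)$ and $(v_2,v_5)$, with norms $10a+10b+2c+2d$ and $25a+5b+5c+d$; which of these is smaller depends on $(a,b,c,d)$, and the argument only closes because \emph{both} contributions, $-12(b-a)(d-c)$ and $-96a(c-b)$, are negative under $0<a<b<c<d$. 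Had they carried opposite signs, the leading coefficient could vanish for some parameters, so this sign coincidence is not bookkeeping one can wave at --- it is the theorem. A proof that does not exhibit these two pairs and these two numbers has not proved the conjecture.

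Two smaller points. First, your claim that the contributions from $M$ ``are identical on both sides and cancel'' handles only the pairs $(l,k)$ with both entries in $M$; the mixed pairs with exactly one entry in $M$ do not cancel termwise, and the paper needs the involution argument of Lemma \ref{delta-relations}(4) (pairing $m$ with $g_i(m)$ so that $\alpha_{m,m'}=-\alpha_{g_i(m),m'}$) to kill $\delta_{[0],[v_i]}$. Second, ``ranking these norms in lexicographic order relative to $(a,b,c,d)$'' is not the right comparison: the correct tool is the partial order $\curleq$ on $\ndop^4$ by partial sums (Lemma \ref{cer-lem}), which is exactly equivalent to ``smaller norm for \emph{every} admissible $(a,b,c,d)$''; a lexicographic order would misidentify the minimal pairs precisely because the two genuine minima are incomparable.
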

{\em Remark 1.}
The above conjecture is a generalization of an example found
by Schiemann in \cite{Sch}.
His example is the case $(a,b,c,d)=(1,7,13,19)$.

{\em Remark 2.}
It was shown by Conway and Sloane in \cite{CS}
(and above in in \ref{iso-spec}) that $L_1$ and $L_2$ are isospectral.

{\em Remark 3.}
As mentioned in \cite[Remark (ii)]{CS} the condition $0<a<b<c<d$ may be
replaced by: $(a,b,c,d) \in \rdop^4_+$ and the four numbers are pairwise
different. 

\section{The discrepancy of a lattice pair}\label{DELTA}
We will distinguish $L_1$ and $L_2$ using our invariant $\Theta_{1,1}$
introduced in \cite{CH1}.  We briefly review its definition and
$q$-expansion.  The discrepancy $\delta$ of the lattice pair $(L_1,L_2)$
is defined to be the difference $2^{-7}\left(\Theta_{1,1}(L_1) -
\Theta_{1,1}(L_2)\right)$.
We develop the $q$-expansion for the discrepancy. 

\subsection{The invariant $\Theta_{1,1,L}$}
For a lattice $L \subset \edop^n$ in the $n$ dimensional Euclidean
space $\edop^n$, and a polynomial $h: \edop^n \to \cdop$ we
denote by $\Theta_{h,L}$ the weighted theta function
\[ \Theta_{h,L}(\tau): = \sum_{l \in L} h(l) q^{\|l\|^2} \mbox{ with }
q = \exp(2 \pi i \tau) .\]
This is an absolutely convergent power series for $\tau$ in the upper
half plane (cf.~\cite[Section 3.2]{Zag} and \cite[Section 6]{Elk}).
While these functions depend
on the embedding $L \subset \edop^n$, there are algebraic combinations of
them which are independent of the embedding:
\begin{theorem}\label{thm31}(cf.~\cite[Theorem 4.2]{CH1})
For a lattice $L \subset {\mathbb E}^4$, the analytic function 
\[ \Theta_{1,1,L}(\tau):=\Theta_{1,1}(\tau,L):= 32\left( \sum_{1 \leq i < j \leq 4}
\Theta_{x_ix_j,L}^2(\tau) \right) + \sum_{i=1}^4
\Theta_{4x_i^2-\sum_{j=1}^4x_j^2,L}^2(\tau) \]
is an analytic function in $\tau$ which is independent of the
embedding $L \to {\mathbb E}^4$.
The function $\Theta_{1,1,L}$ can be expressed in terms of $q =\exp(2\pi
i \tau)$.
Its $q$-expansion is given by 
\[\Theta_{1,1}(\tau,L) = \sum_{m \geq 0}a_m q^m \quad with \quad
a_m=
4  \! \! \! \! \! \! \! \! \! \! \! \!
\sum_{\tiny \begin{array}{c}
(l,k) \in L \times L\\
\|l\|^2+\|k \|^2=m\\
\end{array}} \! \! \! \! \! \! \! \! \! \! \left(4 \cos^2(\measuredangle
(l,k ))-1\right)\|l\|^2\|k \|^2. \]
\end{theorem}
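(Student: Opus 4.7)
The plan is to establish both assertions of the theorem by a single direct computation: I expand the defining expression, collect the coefficient of $q^m$, then observe that the resulting summand depends only on $\Orth(4)$-invariant data of each pair $(l,k)\in L\times L$.

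Expanding each square as $\Theta_{h,L}^2(\tau)=\sum_{l,k\in L}h(l)h(k)q^{\|l\|^2+\|k\|^2}$ and collecting terms gives the coefficient of $q^m$ as a sum over pairs $(l,k)\in L\times L$ with $\|l\|^2+\|k\|^2=m$ of
\[ F(l,k):=32\sum_{1\leq i<j\leq 4}l_il_jk_ik_j+\sum_{i=1}^4\bigl(4l_i^2-\|l\|^2\bigr)\bigl(4k_i^2-\|k\|^2\bigr). \]
The key algebraic step is to apply the elementary identity $\langle l,k\rangle^2=\sum_il_i^2k_i^2+2\sum_{i<j}l_il_jk_ik_j$ to rewrite the first sum, and to expand the second term directly. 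I expect the $\sum_il_i^2k_i^2$-contributions -- the only pieces not expressible through $\langle l,k\rangle$ and the norms -- to cancel between the two sums, leaving $F(l,k)=16\langle l,k\rangle^2-4\|l\|^2\|k\|^2=4\|l\|^2\|k\|^2\bigl(4\cos^2\measuredangle(l,k)-1\bigr)$.

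This single identity yields both claims at once. On one hand, the reduced form of $F(l,k)$ is precisely the $q$-expansion stated in the theorem. On the other, since $F(l,k)$ now depends only on $\|l\|$, $\|k\|$, and $\langle l,k\rangle$, all of which are preserved by orthogonal transformations of $\edop^4$, the coefficient of each $q^m$ is independent of the isometric embedding $L\hookrightarrow\edop^4$, and so is $\Theta_{1,1,L}$ itself. The absolute convergence of the underlying weighted theta series on the upper half plane is standard and follows from the estimates cited from \cite{Zag} and \cite{Elk}.

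The main obstacle is recognizing the right organization: the specific weights $32$ in front of the cross terms and $1$ in front of the $(4x_i^2-\sum_jx_j^2)$-pieces are exactly what force the non-invariant quantity $\sum_il_i^2k_i^2$ to cancel. Conceptually this reflects the fact that $\{x_ix_j\}_{i<j}$ together with $\{4x_i^2-\sum_jx_j^2\}_i$ form an orthogonal basis of the space of degree-two harmonic polynomials on $\rdop^4$ with respect to an $\Orth(4)$-invariant pairing, with the chosen weights restoring the correct relative normalizations so that $\sum h^2$ is $\Orth(4)$-invariant. Once this cancellation is identified, what remains is elementary bookkeeping.
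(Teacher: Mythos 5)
Your proposal is correct and follows essentially the same route as the paper: expand the squares, combine the cross terms and the diagonal terms so that the non-invariant sum $\sum_i l_i^2k_i^2$ cancels, arrive at $16\langle l,k\rangle^2-4\|l\|^2\|k\|^2$, and read off both the $q$-expansion and the independence of the embedding. The only difference is presentational (the paper writes the intermediate step as $16\sum_{i,j}l_il_jk_ik_j-4\|l\|^2\|k\|^2$ rather than invoking the identity for $\langle l,k\rangle^2$ explicitly), so there is nothing substantive to add.
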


\begin{proof} The defining equation gives $\Theta_{1,1}$ as a finite sum of
products of analytic functions. Therefore $\Theta_{1,1}$ itself is
analytic. It follows immediately from the second equality that
$\Theta_{1,1}$ is independent of the chosen embedding.
To show the equivalence of both expressions is a straightforward calculation:
\[ \begin{array}{rcl}
\Theta_{1,1}(\tau,L)
& =& \! \! \!
\sum\limits_{(l,k) \in
L \times L}\left( 32 \sum\limits_{1 \leq i < j \leq 4}
l_il_jk_ik_j  + \sum\limits_{i=1}^4
(4l_i^2-\|l\|^2)(4k_i^{2}-\|k \|^2)
\right)q^{\|l\|^2+\|k\|^2}\\
& =& \! \! \!
\sum\limits_{(l,k) \in
L \times L}\left( 16 \sum\limits_{i=1}^4
\sum\limits_{j=1}^4
l_il_jk_ik_j   -4 \|l\|^2\|
k\|^2 \right)q^{\|l\|^2+\|k\|^2}\\
& =& \! \! \! 
\sum\limits_{(l,k) \in
L \times L}\left( 16 
\langle l, k \rangle^2  -4 \|l\|^2\|
k\|^2 \right)q^{\|l\|^2+\|k\|^2} \, .\\
\end{array}\]
Now the definition of the cosine gives the formula for the
$q$-expansion.
\end{proof}

\subsection{The analytic function $\delta$}
We define the analytic function $\delta$ to be |up to a scaling
factor| the difference of the two lattice invariants $\Theta_{1,1,L^+}$ and
$\Theta_{1,1,L^-}$:
\[ \delta(\tau,a,b,c,d) : = \frac{1}{128} \left(
\Theta_{1,1}(\tau,L_{1;a,b,c,d}) -
\Theta_{1,1}(\tau,L_{2;a,b,c,d}) \right) .\]
Even though the four real parameters $(a,b,c,d)$ are part of the
definition we usually omit them for brevity.
\begin{lemma}\label{sca-for}
We have the $q$-expansion
\[ \delta(\tau) = \frac{1}{8}\sum _{(l,k) \in L_1 \times L_1 }
\left( \sca{l}{k}^2 -\sca{\Psi(l)}{\Psi(k)}^2 \right)
q^{\|k\|^2+\|l\|^2} . \]
\end{lemma}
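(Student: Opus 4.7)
The plan is to apply the $q$-expansion from Theorem \ref{thm31} to both $L_1$ and $L_2$, and then re-index the sum over $L_2\times L_2$ via the bijection $\Psi$ so that the resulting difference is over a single lattice.

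First I would write, using the final expression from Theorem \ref{thm31},
\[ \Theta_{1,1}(\tau,L_i) = \sum_{(l,k)\in L_i\times L_i}\bigl(16\sca{l}{k}^2 - 4\|l\|^2\|k\|^2\bigr)\,q^{\|l\|^2+\|k\|^2} \]
for $i=1,2$. Next I would exploit the bijection $\Psi:L_1\to L_2$ constructed in \ref{iso-spec}: since each $g_i\in K_4$ acts as an isometry, $\|\Psi(l)\|=\|l\|$ for every $l\in L_1$. Substituting $(l',k')=(\Psi(l),\Psi(k))$ in the $L_2$-sum therefore gives
\[ \Theta_{1,1}(\tau,L_2) = \sum_{(l,k)\in L_1\times L_1}\bigl(16\sca{\Psi(l)}{\Psi(k)}^2 - 4\|l\|^2\|k\|^2\bigr)\,q^{\|l\|^2+\|k\|^2}. \]

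Subtracting the two expansions, the diagonal terms $-4\|l\|^2\|k\|^2$ cancel term-by-term (both sums being indexed by the same set $L_1\times L_1$ with identical $q$-exponents), leaving
\[ \Theta_{1,1}(\tau,L_1)-\Theta_{1,1}(\tau,L_2) = 16\sum_{(l,k)\in L_1\times L_1}\bigl(\sca{l}{k}^2-\sca{\Psi(l)}{\Psi(k)}^2\bigr)\,q^{\|l\|^2+\|k\|^2}. \]
Dividing by $128$ yields the claimed formula with prefactor $\tfrac{1}{8}$.

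There is no serious obstacle: the only point that deserves a sentence is that $\Psi$ is a norm-preserving \emph{bijection} (not a lattice isomorphism, as noted just after its definition), which is exactly what is needed to legitimately re-index the $L_2\times L_2$ sum and to make both the diagonal term and the $q$-exponent match. Everything else is termwise manipulation of absolutely convergent series.
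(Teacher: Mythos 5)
Your proposal is correct and follows exactly the paper's own argument: apply the bilinear $q$-expansion from the proof of Theorem \ref{thm31} to both lattices, re-index the $L_2$-sum via the norm-preserving bijection $\Psi$, and observe that the $-4\|l\|^2\|k\|^2$ terms cancel, leaving the factor $16/128=1/8$. No discrepancies to report.
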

\begin{proof}
We have seen in the proof of Theorem \ref{thm31} that 
\[ \Theta_{1,1,L_1}(\tau) = \sum\limits_{(l,k) \in
L_1 \times L_1}\left( 16 
\langle l, k \rangle^2  -4 \|l\|^2\|
k\|^2 \right)q^{\|l\|^2+\|k\|^2} \, .\]
Using the length preserving bijection $\Psi:L_1 \to L_2$ from
\ref{iso-spec} we can write 
\[\begin{array}{rcl} \Theta_{1,1,L_2}(\tau)& =& \displaystyle
\sum\limits_{(l,k) \in
L_1 \times L_1}\left( 16 
\langle \Psi(l), \Psi(k) \rangle^2  -4 \|\Psi(l)\|^2\|
\Psi(k)\|^2 \right)q^{\|\Psi(l)\|^2+\|\Psi(k)\|^2}\\
& =& \displaystyle
\sum\limits_{(l,k) \in
L_1 \times L_1}\left( 16 
\langle \Psi(l), \Psi(k) \rangle^2  -4 \|l\|^2\|
k\|^2 \right)q^{\|l\|^2+\|k\|^2} . \\
\end{array}
\]
Now the definition of $\delta$ implies the stated formula.
\end{proof}

Next we define for $[v],[v'] \in L_1/M$ the analytic functions
$\delta_{[v],[v']}$ by
\[ \delta_{[v],[v']}(\tau) = \sum\limits_{(l,k) \in
[v] \times [v']}\left( \sca{l}{k}^2-
\langle \Psi(l), \Psi(k) \rangle^2 \right)q^{\|l\|^2+\|k\|^2} .
\]
Since every vector in $L_1$ lies in exactly one class of $L_1/M$ we obtain
from Lemma \ref{sca-for}
\begin{equation}
\delta(\tau) =\frac{1}{8} \sum\limits_{([v],[v']) \in L_1/M \times L_1/M}
\delta_{[v],[v']} \, .
\end{equation}
\subsection{Recalling notation}\label{total-recall}
Before we proceed, we give a system of representatives for $L_1/M$. We
use the description from
\ref{c1c2} as $L_1/M= \{ [0],\pm [v_0],\pm [v_1],\pm [v_2],\pm [v_3]\}$
where
\[ v_0 = \matvier{-1\\3\\-1\\1}, \,
v_1 = \matvier{1\\-1\\-1\\3}, \,
v_2 = \matvier{3\\1\\-1\\-1}, \,
v_3 = \matvier{-1\\-1\\-3\\-1} \] with respect to the basis $\Bcal$.
Furthermore, we recall that for $m \in M$ we have $\Psi(m)=m$ and
$\Psi(m \pm v_i) =g_i(m \pm v_i)$. The $g_i$ are isometries which are
given with respect to $\Bcal$ by the diagonal matrices
$g_0 = \id$, $g_1=\diag(-1,1,-1,1)$, $g_2=\diag(-1,-1,1,1)$, and
$g_3=\diag(1,-1,-1,1)$.

\begin{lemma}\label{delta-relations}
The following relations among the $\delta_{[v],[v']}$ hold:
\begin{enumerate}
\item $\delta_{[v],[v]} = 0$ for all $[v] \in L_1/M$.
\item $\delta_{[v],[v']} = \delta_{[v'],[v]}$
for all pairs $[v],[v'] \in L_1/M$.
\item $\delta_{[v],[-v']}=\delta_{[v],[v']}$
for all pairs $[v],[v'] \in L_1/M$.
\item $\delta_{[0],[v]} = 0$ for all $[v] \in L_1/M$.
\end{enumerate}
\end{lemma}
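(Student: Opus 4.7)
The plan is to unwind the definition
\[\delta_{[v],[v']}(\tau)=\sum_{(l,k)\in[v]\times[v']}\bigl(\sca{l}{k}^{2}-\sca{\Psi(l)}{\Psi(k)}^{2}\bigr)q^{\|l\|^{2}+\|k\|^{2}}\]
and to exploit the decisive property of $\Psi$ recalled in \ref{total-recall}: on $[0]=M$ the map $\Psi$ is the identity, and on each coset $[\pm v_i]$ it is the restriction of the $K_4$-isometry $g_i$. In particular $\Psi$ is linear on each coset, odd (so $\Psi(-l)=-\Psi(l)$), and length-preserving.

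For (1), if $[v]=[v']$ then $l$ and $k$ lie in a common coset on which $\Psi$ acts by a single $g\in K_4$; hence $\sca{\Psi(l)}{\Psi(k)}=\sca{g(l)}{g(k)}=\sca{l}{k}$ and every summand vanishes. Item (2) is immediate from the symmetry of the summand and of the exponent under the swap $(l,k)\mapsto(k,l)$. For (3), the involution $k\mapsto -k$ is a norm-preserving bijection $[v']\to[-v']$, and since $\Psi$ is odd one has $\Psi(-k)=-\Psi(k)$; squaring then absorbs both sign changes.

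The only item requiring a substitution rather than a pure symmetry is (4). For $l\in M$ one has $\Psi(l)=l$, and for $k\in[v]=[\pm v_i]$ one has $\Psi(k)=g_i(k)$. Since $g_i$ preserves $L$, it also preserves $M=3L$, so $l\mapsto g_i(l)$ is a norm-preserving bijection of $M$. Applying this bijection in the $l$-variable of the mixed term and using $\sca{g_i(l)}{g_i(k)}=\sca{l}{k}$ turns $\sum_{l\in M}\sca{l}{g_i(k)}^{2}q^{\|l\|^{2}+\|k\|^{2}}$ into $\sum_{l\in M}\sca{l}{k}^{2}q^{\|l\|^{2}+\|k\|^{2}}$, which cancels the scalar-product part for every fixed $k$; summing over $k\in[v]$ yields zero. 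None of the four arguments presents a real obstacle beyond keeping track of which $g_i$ corresponds to which coset.
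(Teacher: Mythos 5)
Your proof is correct and follows essentially the same route as the paper: (1)--(3) are handled identically, and for (4) your change of variables $l\mapsto g_i(l)$ on $M$ is just a repackaging of the paper's pairing of $m$ with $g_i(m)$ (the paper makes the re-indexing explicit via the orbit decomposition $M=M_i^1\cup\bigcup M_i^2$, but the mechanism --- $g_i$ preserves $M$, is an isometry, and an involution --- is the same).
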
 

\begin{proof}
(1) We assume that $v=v_i$. The cases when $v=-v_i$ or $v=0$ work similar.
Now we rewrite the expression for $\delta_{[v_i],[v_i]}$ as follows
\[ \delta_{[v_i],[v_i]}(\tau) = \!\!\!\! \sum\limits_{(m,m') \in M \times M}
\left( \sca{m+v_i}{m'+v_i}^2-
\langle \Psi(m+v_i), \Psi(m'+v_i) \rangle^2
\right)q^{\|m+v_i\|^2+\|m'+v_i\|^2} . \]
Since $\Psi(m+v_i)=g_i(m+v_i)$, $\Psi(m'+v_i)=g_i(m'+v_i)$, and $g_i$ is
an isometry, all summands are zero.\\
(2) follows immediately from the definition of $\delta_{[v],[v']}$ and
$\delta_{[v'],[v]}$.\\
(3) First  we expand the expression for $\delta_{[v],[-v']}$.
\[ \delta_{[v],[-v']}(\tau) = \!\!\!\! \sum\limits_{(m,m') \in M \times M}
\left( \sca{m+v}{m'-v'}^2-
\langle \Psi(m+v), \Psi(m'-v') \rangle^2
\right)q^{\|m+v\|^2+\|m'-v'\|^2} . \]
Changing the summation parameter $m'=-m''$ we obtain
\[ \delta_{[v],[-v']}(\tau) = \!\!\!\! \sum\limits_{(m,m'') \in M \times M}
\left( \sca{m+v}{-m''-v'}^2-
\langle \Psi(m+v), \Psi(-m''-v') \rangle^2
\right)q^{\|m+v\|^2+\|-m''-v'\|^2} . \]
Since we have $\Psi(-m''-v')=-\Psi(m''+v')$, we obtain
\[ \delta_{[v],[-v']}(\tau) = \!\!\!\! \sum\limits_{(m,m'') \in M \times M}
\left( \sca{m+v}{m''+v'}^2-
\langle \Psi(m+v), \Psi(m''+v') \rangle^2
\right)q^{\|m+v\|^2+\|m''+v'\|^2}. \]
This gives the equality (3).\\
(4) From (2) we see that we may assume that $v \not\in [0]$.
By (3) we may assume that $v=v_i$ for some $i \in \{ 0,1,2,3\}$.
Before we show equality (4) we consider the action of the involution
$g_i$ on $M$. The orbits of length one correspond to the invariant vectors
under $g_i$. We denote this set by $M_i^1$. The orbits of length two we
denote by $M_i^2$. We use the disjoint union
\[M = M_i^1 \cup \bigcup_{ \{ m, g_i(m)\} \in M_i^2} \{ m, g_i(m)\} \, .
\]
Now we split up the summation over $M$ into two parts due to this
decomposition:
\[ \delta_{[0],[v_i]}  =
\sum\limits_{(m,m') \in M_i^1 \times M} \alpha_{m,m'} q^{\|m\|^2+\|m'+v_i\|^2}
+ \sum\limits_{(\{m,g_i(m)\},m') \in M_i^2 \times M}
\beta_{\{m,g_i(m)\},m'} q^{\|m\|^2+\|m'+v_i\|^2}\]
where the coefficients $\alpha_{m,m'}$ and $\beta_{\{m,g_i(m)\},m'}$ are
defined by
\[\begin{array}{rcl}
\alpha_{m,m'} &=& \sca{m}{m'+v_i}^2-
\langle \Psi(m), \Psi(m'+v_i) \rangle^2 \\
\beta_{\{m,g_i(m)\},m'} &=& \alpha_{m,m'} + \alpha_{g_i(m),m'} \\
\end{array}
\]
Now we consider the coefficients $a_{m,m'}$.
\[ \begin{array}{rcll}
\alpha_{m,m'}& =& \sca{m}{m'+v_i}^2-
\langle \Psi(m), \Psi(m'+v_i) \rangle^2\\
& =& \sca{m}{m'+v_i}^2 -\sca{m}{g_i(m'+v)}^2\\
& =& \sca{g_i(m)}{g_i(m'+v_i)}^2 -\sca{g_i(m)}{g_i(g_i(m'+v))}^2
& \mbox{ since } g_i \mbox{ is an isometry}\\
& =& \sca{g_i(m)}{g_i(m'+v_i)}^2 -\sca{g_i(m)}{m'+v}^2
& \mbox{ since } g_i \mbox{ is an involution}\\
&=& - \alpha_{g_i(m),m'} \, .\\
\end{array}\]
We deduce that the coefficients $\beta_{\{m,g_i(m)\},m'}$ are all zero.
Furthermore, the coefficients $\alpha_{m,m'}$ are zero for $m =g_i(m)$.
\end{proof}

\begin{corollary}\label{delta}
The function $\delta(\tau)$ can be expressed as
$ \delta(\tau) = \displaystyle \sum_{0 \leq i < j \leq 3}
\delta_{[v_i],[v_j]}(\tau)$.
\end{corollary}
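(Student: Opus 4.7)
The plan is to reduce equation (1) to the claimed formula by a careful bookkeeping argument using the four relations in Lemma \ref{delta-relations}. Equation (1) writes $\delta(\tau)$ as $\tfrac{1}{8}$ times a sum of $\delta_{[v],[v']}$ over ordered pairs in $L_1/M \times L_1/M$. With the representatives $L_1/M=\{[0],\pm[v_0],\pm[v_1],\pm[v_2],\pm[v_3]\}$ from \ref{total-recall}, this is a sum over $9\times 9=81$ ordered pairs, and the goal is to show that only pairs with two distinct indices $i\neq j$ survive, each class $\{i,j\}$ contributing the single term $\delta_{[v_i],[v_j]}$ with total multiplicity $8$.

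First I would dispose of the $17$ ordered pairs containing $[0]$: these vanish directly by (4) when $[v']=[0]$, and by (4) combined with the symmetry (2) when $[v]=[0]$. This reduces the sum to the $64$ pairs $([\epsilon v_i],[\epsilon' v_j])$ with $\epsilon,\epsilon'\in\{+,-\}$ and $i,j\in\{0,1,2,3\}$. Among these, the $8$ diagonal pairs vanish by (1); and the $8$ same-index, opposite-sign pairs $([\pm v_i],[\mp v_i])$ vanish too, since $\delta_{[v_i],[-v_i]}=\delta_{[v_i],[v_i]}=0$ by (3) followed by (1), the other orientation being handled by (2).

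For the remaining $48$ pairs, which all have distinct indices $i\neq j$, applying (3) in the second argument and, after transposing via (2), also in the first argument yields
\[
\delta_{[\epsilon v_i],[\epsilon' v_j]} \;=\; \delta_{[v_i],[v_j]}
\]
for every choice of signs. For each of the $\binom{4}{2}=6$ unordered pairs $\{i,j\}$ with $i<j$ there are exactly $2\cdot 2\cdot 2 = 8$ such ordered pairs (four sign combinations and two orderings of $i$, $j$), each equal to $\delta_{[v_i],[v_j]}$. Multiplying by the prefactor $\tfrac{1}{8}$ from (1), the total collapses to $\sum_{0\le i<j\le 3}\delta_{[v_i],[v_j]}$, as claimed.

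The whole argument is a purely combinatorial reduction and presents no real obstacle; the only mild care needed is to apply relation (3) to \emph{both} arguments — one of them after transposing through (2) — and to keep track of the $9\times 9$ ordered pairs as $17+8+8+48$.
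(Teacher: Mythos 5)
Your proof is correct and follows essentially the same route as the paper: starting from equation (1) and using the four relations of Lemma \ref{delta-relations} to discard the pairs involving $[0]$ and the pairs with $[v]=\pm[v']$, then collapsing the remaining $48$ ordered pairs into $8$ copies of each $\delta_{[v_i],[v_j]}$ with $i<j$, cancelling the prefactor $\tfrac18$. Your bookkeeping ($81=17+8+8+48$) is accurate and, if anything, slightly more explicit than the paper's.
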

\begin{proof}
Starting with the formula of equation (1) and the set
$\{0,\pm v_0,\pm v_1, \pm v_2, \pm v_3 \}$ of representatives for
$L_1/M$ from \ref{total-recall} we get
\[\delta(\tau) = \displaystyle \frac{1}{8}
\sum_{v,v' \in \{0,\pm v_0,\pm v_1, \pm v_2, \pm v_3 \}}
\delta_{[v],[v']}(\tau) . \]
We may remove all the
summands $\delta_{[v],[v']}$ with $[v] = \pm [v']$ by part (2) and (3)
of the above lemma. Furthermore, we may remove the summands
$\delta_{[v],[0]}$ and $\delta_{[0],[v']}$ by part (4) of Lemma
\ref{delta-relations}. There remain 48 summands.
For each $i<j$ we obtain from $\delta_{[\pm v_i],[\pm v_j]}$ and
$\delta_{[\pm v_j],[\pm v_i]}$ eight times the summand
$\delta_{[v_i],[v_j]}$ by (2) and (3) of Lemma \ref{delta-relations}. 
\end{proof}

\section{Minimal vectors and minimal pairs}\label{MIN}
In this section we determine the first exponent in the $q$-expansion of
$\delta$. By Corollary \ref{delta} we have to search for the shortest
lattice vectors in the equivalence classes $[v_i]$ only.
The shortest vector in an equivalence class depends on the real parameters
$(a,b,c,d)$. A vector is called minimal, if it is the shortest for a choice of the four
parameters. It turns out that in each equivalence class there are at
most two minimal vectors.

\subsection{Minimal vectors}
The square norm of a vector $v=\sum_{i=0}^3 \lambda_i u_i$ is given by
$\|v\|^2=a \lambda_0^2 + b \lambda_1^2 + c \lambda_2^2 + d\lambda_3^2$.
We decompose the map assigning a vector $v \in L_1$ its square norm as
follows
\[ \xymatrix{L_1 \ar[rr]^-\phi \ar[drr]_-{l \mapsto \|l\|^2} && \ndop^4
\ar[d]^\sigma
& \mbox{ with } \quad
 \phi\left(\sum\limits_{i=0}^3 \lambda_i u_i\right)  =
(\lambda_0^2,\lambda_1^2,\lambda_2^2,\lambda_3^2) \quad \mbox{ and}
\\ && \rdop & 
 \sigma(n_0,n_1,n_2,n_3)  = an_0+bn_1+cn_2+dn_3 .} \\
\]
Furthermore, we define a partial ordering $\curleq$ on $\ndop^4$ by
\[ (n_0,n_1,n_2,n_3) \curleq (n'_0,n'_1,n'_2,n'_3) \iff
\sum_{i=i_0}^3 n_i \leq \sum_{i=i_0}^3 n'_i
\mbox{ for all }i_0 \in \{0,1,2,3\} .\]
As usual, we write $(n_0,n_1,n_2,n_3) \prec (n'_0,n'_1,n'_2,n'_3)$
when $(n_0,n_1,n_2,n_3) \curleq (n'_0,n'_1,n'_2,n'_3)$ but not
$(n'_0,n'_1,n'_2,n'_3) \curleq (n_0,n_1,n_2,n_3)$ hold.

\begin{lemma}\label{cer-lem}
We have $(n_0,n_1,n_2,n_3) \prec (n'_0,n'_1,n'_2,n'_3)$ if and only if
the inequality\\
$an_0+bn_1+cn_2+dn_3 < an'_0+bn'_1+cn'_2+dn'_3$ holds for all real
numbers $(a,b,c,d)$ fulfilling $0<a<b<c<d$.
\end{lemma}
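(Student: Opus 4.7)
My plan is to reduce both implications to a single Abel summation identity. For a tuple $n=(n_0,n_1,n_2,n_3)\in\ndop^4$, write $S_{i_0}(n) := \sum_{i=i_0}^3 n_i$, so that $n_i = S_i(n)-S_{i+1}(n)$ with the convention $S_4(n)=0$. Rearranging the sum gives
\[ \sigma(n)=an_0+bn_1+cn_2+dn_3 \;=\; a\,S_0(n)+(b-a)\,S_1(n)+(c-b)\,S_2(n)+(d-c)\,S_3(n). \]
Subtracting the analogous identity for $n'$, and writing $T_{i_0}:=S_{i_0}(n')-S_{i_0}(n)$, yields the key identity
\[ \sigma(n')-\sigma(n) \;=\; a\,T_0 \;+\; (b-a)\,T_1 \;+\; (c-b)\,T_2 \;+\; (d-c)\,T_3. \]
Under the constraint $0<a<b<c<d$, the four coefficients $a$, $b-a$, $c-b$, $d-c$ are all strictly positive.

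The forward direction is then immediate: $n\prec n'$ unpacks to $T_{i_0}\geq 0$ for every $i_0\in\{0,1,2,3\}$, with at least one strict inequality, so the right-hand side of the displayed identity is strictly positive for every admissible parameter tuple, giving $\sigma(n)<\sigma(n')$.

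For the converse I would argue contrapositively. If $n\not\prec n'$ then either (i) $n\succeq n'$, in which case every $T_{i_0}\leq 0$ and the identity immediately yields $\sigma(n)\geq\sigma(n')$ for every admissible parameter; or (ii) some $T_{i_0}<0$. In case (ii) the task is to exhibit $(a,b,c,d)$ with $0<a<b<c<d$ for which the term $(\cdot)\,T_{i_0}$ dominates the other three. An explicit one-parameter family does the job: for $i_0=1$ take $(a,b,c,d)=(\epsilon,\,1+\epsilon,\,1+2\epsilon,\,1+3\epsilon)$, whose coefficient vector $(\epsilon,1,\epsilon,\epsilon)$ isolates $T_1$; and entirely analogous choices work for $i_0=0,2,3$ (inflate $a$ itself, or inflate one of the gaps $c-b$, $d-c$ instead). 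Choosing $\epsilon$ small enough that $|T_{i_0}|$ outweighs the combined contribution from the other three $T_j$'s makes $\sigma(n')-\sigma(n)$ negative, violating the required strict inequality.

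The only non-formal step is case (ii) of the converse, where one must verify that the four tail-sum inequalities defining $\curleq$ are genuinely independent conditions on the admissible parameter set; the explicit families above show precisely this. Modulo that check the whole statement is just Abel's summation-by-parts transferred to the differences of tail sums.
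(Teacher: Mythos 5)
Your proof is correct and rests on exactly the same identity the paper uses, namely the Abel-summation rewriting $an_0+bn_1+cn_2+dn_3=a(n_0+n_1+n_2+n_3)+(b-a)(n_1+n_2+n_3)+(c-b)(n_2+n_3)+(d-c)n_3$, whose coefficients are positive precisely when $0<a<b<c<d$. The paper simply states this identity and calls the equivalence an obvious consequence; you have filled in both directions (including the explicit parameter families isolating each tail-sum difference for the converse), which is a welcome but not essentially different elaboration.
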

\begin{proof}
This equivalence is an obvious consequence of the equality
$an_0+bn_1+cn_2+dn_3=(d-c)n_3+(c-b)(n_2+n_3)+(b-a)(n_1+n_2+n_3) +
a(n_0+n_1+n_2+n_3)$.
\end{proof}
Using the map $\phi:L_1 \to \ndop^4$,
we may extend the relation $\prec$ to the lattice $L_1$ by
defining $l \prec l' \iff \phi(l) \prec \phi(l')$.
For a subset $L' \subset L_1$ we say that $l' \in L'$ is minimal, when
there is no $l''\in L'$ with $l'' \prec l'$.

\begin{lemma}\label{min-vec}
The following table gives all the minimal vectors in the equivalence classes
$[v_i]$ for $i=0,\ldots,3$.
\[ \begin{array}{c|c|c|c|c}
\mbox{class} & [v_0]  & [v_1] & [v_2] & [v_3]\\
\hline
\begin{array}{c}
\mbox{minimal}\\
\mbox{vectors}
\end{array}
& v_0, v_4
& v_1, v_5
& v_2
& v_3, v_6
\end{array}\,\,
\mbox{ with }
v_4 = \vect{-4\\0\\2\\-2} ,
v_5 = \vect{4\\2\\2\\0},
v_6= \vect{-4\\2\\0\\2}.
\]
\end{lemma}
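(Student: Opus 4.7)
The plan is to prove Lemma~\ref{min-vec} by a direct case analysis of the four equivalence classes $[v_i]$, reducing each case to a short finite enumeration by means of a parity-sector decomposition.

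The key structural observation is that every basis vector $l_0,l_1,l_2,l_3$ of $L$ has all four coordinates odd in the $\mathcal{B}$-basis; hence every element of $L$ has all four coordinates of the same parity, and $M=3L$ splits into the vectors with entries in $6\zdop$ and the vectors with entries in $3+6\zdop$. Each class $[v_i]$ therefore splits into two ``parity sectors'' according to the common parity of the coordinates of its elements. Combined with the residues of $v_i$ modulo $3$, this yields in each sector an explicit congruence condition on each coordinate $w_j$ modulo $6$, and hence an explicit lower bound on $|w_j|$. For example, in the all-odd sector of $[v_0]$ one obtains $|w_0|,|w_2|,|w_3|\geq 1$ and $|w_1|\geq 3$, while in the all-even sector one obtains $|w_0|,|w_2|\geq 2$, $|w_1|\geq 0$, $|w_3|\geq 2$.

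For each proposed minimal vector $v^\ast$ in the table I argue as follows. Membership $v^\ast\in[v_i]$ is a one-line computation verifying that $v^\ast-v_i$ is one of $\pm m_0,\ldots,\pm m_3$ from~\ref{CoSl}. To prove $v^\ast$ is minimal, I assume $w\in[v_i]$ with $\phi(w)\curleq \phi(v^\ast)$; the tail-sum inequalities combined with the coordinate lower bounds above force each $|w_j|$ into a very small finite set, and for each candidate I check whether $w-v_i\in M$ by solving the resulting $4\times 4$ linear system over $\zdop$; only $w=v^\ast$ survives. To prove the list exhaustive, I show that any other $w\in[v_i]$ in a given sector is strictly dominated in $\prec$ by the listed vector for that sector; again the coordinate lower bounds and the tail-sum identity of Lemma~\ref{cer-lem} reduce this to a small finite check.

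The class $[v_2]$ is special. In its all-even sector the componentwise lower bound $\phi(w)\geq(0,4,4,4)$ yields tail sums $s(w)\geq(12,12,8,4)$, whereas $\phi(v_2)=(9,1,1,1)$ has tail sums $(12,3,2,1)$; comparing, $s_i(v_2)\leq s_i(w)$ for all $i$, with strict inequality in $s_1,s_2,s_3$, so $v_2\prec w$ for every $w$ in the all-even sector and no second minimal vector appears. In the other three classes the analogous estimate fails precisely because the coordinate of $v_i$ that is divisible by $3$ may vanish in the all-even sector, producing a second minimal vector of the form $v_i\pm m_k$. The only real obstacle is the bookkeeping, since the same argument has to be repeated for four classes and, in three of them, for both parity sectors.
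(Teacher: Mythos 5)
Your proposal is correct in substance, and its load-bearing claims check out, but it organizes the case analysis differently from the paper. The paper works entirely modulo $3$: it writes a general element of $[v_i]$ as $w=v_i+\sum\lambda_j m_j$, observes that each coordinate lies in a fixed residue class mod $3$ (so $w_j^2\geq 1$ or $w_j\in 3\zdop$), and then pins down the exceptional vectors by solving the resulting linear equations in the $\lambda_j$. You instead add the parity observation --- all four $\Bcal$-coordinates of a lattice vector are congruent mod $2$, so $M$ splits into an all-$0$ and an all-$3$ class mod $6$ --- which upgrades the congruences to mod $6$ and yields componentwise lower bounds on $\phi(w)$ in each of the two sectors of $[v_i]$. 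This is a genuine structural fact not stated in the paper, and it buys you a cleaner reduction: in the all-odd sector $v_i$ is forced to be the componentwise minimum of $\phi$, and in the all-even sector only a handful of candidate coordinate vectors survive the tail-sum inequalities; your treatment of $[v_2]$ (bound $(0,4,4,4)$ with tail sums $(12,12,8,4)$ dominating $(12,3,2,1)$) is exactly right and explains conceptually why that class has a single minimal vector. What your method cannot avoid, and what you correctly flag, is the lattice-membership check for the surviving candidates: e.g.\ in the all-even sector of $[v_0]$ the coordinate vector $(2,0,2,-2)$ satisfies $\phi=(4,0,4,4)\prec\phi(v_4)$, so the lemma would be false if it lay in $[v_0]$; one verifies $(3,-3,3,-3)\notin M$ (the linear system has half-integral solution), and similarly $(-2,2,2,0)\notin[v_1]$ and $(2,2,0,2)\notin[v_3]$. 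These checks play the role of the paper's derivation $\lambda_3=\tfrac12$ ruling out inequality (5). Your write-up defers the finite enumerations, but they are routine and the ones I verified all close; as a complete proof you would still need to carry them out for all four classes and both sectors.
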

\begin{proof}
The proof is similar in all four cases, so we consider here only the
equivalence class $[v_0]$ leaving the remaining cases to the reader.  First we
remark that neither $v_0 \curleq v_4$ nor $v_4 \curleq v_0$ holds. So it
is enough to show for any $w \in [v_0]$ at least one of the inequalities
$v_0 \prec w$ or $v_4 \prec w$ is satisfied, unless $w \in \{v_0,v_4\}$.
We take a vector $w = v_0 + \sum_{i=0}^3 \lambda_i m_i$ with $m_i$ the
lattice generators of $M$ from \ref{CoSl} and $\lambda_i \in \zdop$.
This means
\[ w= \vect{w_1\\w_2\\w_3\\w_4} \mbox{ with }
\begin{array}{rcrr}
w_1 &=& -1 +3(-\lambda_0 +  \lambda_1 + \lambda_2 + \lambda_3)\\
w_2 &=& 3 +3(+\lambda_0 -  \lambda_1 + \lambda_2 + \lambda_3)\\
w_3 &=& -1 +3(+\lambda_0 +  \lambda_1 - \lambda_2 + \lambda_3)\\
w_4 &=& 1 +3(+\lambda_0 +  \lambda_1 + \lambda_2 - \lambda_3)&.\\
\end{array}\]
Suppose now that the inequality $v_0 \curleq w$ is not satisfied. By
definition of the relation $\curleq$ at least 
one of the following four inequalities hold:
\begin{eqnarray}
w_4^2& <& 1\\
w_3^2+w_4^2& <& 2\\
w_2^2+w_3^2+w_4^2& <& 11\\
w_1^2+w_2^2+w_3^2+w_4^2& <& 12
\end{eqnarray}
The integer $w_4$ is congruent to 1 modulo three. Thus $w_4^2 \geq 1$. This
rules out (2). By the same argument we conclude that $w_3^2 \geq 1$
which makes inequality (3) impossible.\\
Assume now that (4) is fulfilled. Since $w_3^2+w_4^2 \geq 2$, we deduce
that  $w_2^2 <9$. However, $w_3$ is an integer multiple of 3, which
implies $w_2=0$. We conclude that $\lambda_0 -  \lambda_1 + \lambda_2 +
\lambda_3 =-1$.
This way, we obtain $\lambda_1=1+\lambda_0+ \lambda_2 + \lambda_3 $.
We obtain the following equations and inequality for $w_3$ and $w_4$:
\[w_3= 2+6(\lambda_0+ \lambda_3) , \quad
w_4 =4 +6(\lambda_0+ \lambda_2) , \mbox{ and }
w_3^2+w_4^2 < 11 .
\]
Since the $\lambda_i$ are integers, we must have $w_3=2$ and $w_4=-2$.
We conclude that $\lambda_3=-\lambda_0$, and $\lambda_2=-1-\lambda_0$.
From $\lambda_1=1+\lambda_0+ \lambda_2 + \lambda_3 $, we deduce
$\lambda_1= -\lambda_0$.
This yields
$w=\left( -4-12\lambda_0, 0, 2,-2 \right)^t$. So $v_4
\curleq w$ with equality only for $w= v_4$.\\
Finally we assume that inequality (5) holds. As before, we have
$w_1^2+w_3^2+w_4^2 \geq 3$, and $w_2$ is divisible by three. So from
$w_2^2<9$ we conclude $w_2=0$. As before we get $\lambda_1=1+\lambda_0+
\lambda_2 + \lambda_3 $. This yields the equations and
inequality for $w_1$, $w_3$, and $w_4$:
\[w_1=2+6(\lambda_2+ \lambda_3), \quad
w_3= 2+6(\lambda_0+ \lambda_3) , \quad
w_4 =4 +6(\lambda_0+ \lambda_2) , \mbox{ and }
w_1^2+w_3^2+w_4^2 < 12 .
\]
Since the $\lambda_i$ are integers this implies the three equalities
\[\lambda_2+ \lambda_3=0, \quad
\lambda_0+ \lambda_3=0, \quad
\lambda_0+ \lambda_2=-1. \]
From these equalities we deduce $\lambda_3=\frac{1}{2}$. Thus
inequality (5) is never fulfilled.\\
So we have seen that all $w \in v_0$ which are not of the form
$w=(-4-12\lambda_0,0,2,-2)^t$ satisfy $v_0 \curleq w$.
All vectors $w$ of this form with $w \ne v_4$ satisfy $v_4 \prec w$.
If $v_0 \curleq w$ and $w \curleq v_0$, then the squares of the coordinates
of $w$ coincide with those of $v_0$. Thus, we have $w=(\pm 1, \pm 3, \pm
1, \pm 1)$. The only vector of this type in $[v_0]$  is $v_0$.
\end{proof}

\begin{proposition}\label{min-pair}
Suppose we have two lattice vectors $v,v' \in L_1$ such that
$v\in [v_i]$ and $v' \in [v_j]$ for $0 \leq i < j \leq 3$.
If $(v,v') \not \in \{ (v_0,v_2),(v_5,v_2) \} $,
then for all $0<a<b<c<d$ we have
\[\|v\|^2+\|v'\|^2 > \min \{\|v_0\|^2+\|v_2\|^2, \|v_5\|^2+ \|v_2\|^2
\} .\]
\end{proposition}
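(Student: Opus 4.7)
The plan is to reduce the claim to a finite check on pairs of minimal representatives, using Lemma \ref{min-vec} together with the descending chain condition for $\curleq$, and then to dispose of that finite check via the $\prec$-criterion of Lemma \ref{cer-lem}.

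For the reduction, observe that $\curleq$ pulls back to the componentwise order on $\ndop^4$ along the invertible suffix-sum map $(n_0,n_1,n_2,n_3) \mapsto (n_0{+}n_1{+}n_2{+}n_3,\,n_1{+}n_2{+}n_3,\,n_2{+}n_3,\,n_3)$, and hence $(\ndop^4,\curleq)$ satisfies the descending chain condition. Consequently every vector in $[v_i]$ dominates (non-strictly) some minimal vector exhibited in Lemma \ref{min-vec}. Given arbitrary $v \in [v_i]$ and $v' \in [v_j]$ with $i<j$, I pick minimals $w\curleq v$ in $[v_i]$ and $w'\curleq v'$ in $[v_j]$; Lemma \ref{cer-lem} applied to each factor then gives $\|w\|^2+\|w'\|^2 \le \|v\|^2+\|v'\|^2$ for all $0<a<b<c<d$, with equality only when $(v,v')=(w,w')$. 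It therefore suffices to verify the proposition for the sixteen pairs of minimals with $i<j$ that are distinct from $(v_0,v_2)$ and $(v_5,v_2)$.

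For the finite check, I would set $A := \phi(v_0)+\phi(v_2) = (10,10,2,2)$ and $B := \phi(v_5)+\phi(v_2) = (25,5,5,1)$. Since $\sigma$ is linear, $\|w\|^2+\|w'\|^2 = \sigma(\phi(w)+\phi(w'))$, so it is enough to show, for each of the sixteen relevant pairs, that $X := \phi(w)+\phi(w')$ satisfies $A \prec X$ or $B \prec X$; by Lemma \ref{cer-lem} this delivers $\sigma(X) > \min\{\sigma(A),\sigma(B)\}$ for all admissible parameters, which is exactly the claimed inequality. A short sample: for $(w,w')=(v_0,v_1)$ one gets $X=(2,10,2,10)$ with suffix sums $(24,22,12,10)$, which coordinatewise dominate the suffix sums $(24,14,4,2)$ of $A$, so $A \prec X$.

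The main (and only) obstacle is bookkeeping: one must confirm that the dichotomy ``$A \prec X$ or $B \prec X$'' does hold for every pair on the list, thereby also justifying that $(v_0,v_2)$ and $(v_5,v_2)$ form the complete exceptional set. This is the content of the observation that $A$ and $B$ are precisely the two $\curleq$-minimal elements of the finite collection $\{\phi(w)+\phi(w')\}$, and a direct tabulation of the four suffix sums of each candidate $X$ against those of $A$ and $B$ reduces the verification to a brief case analysis.
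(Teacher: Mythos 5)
Your argument is correct and follows essentially the same route as the paper: reduce to the finite set of minimal representatives from Lemma \ref{min-vec} (the paper states this reduction more tersely, while you justify it via the descending chain condition for $\curleq$), and then verify that $A=(10,10,2,2)$ and $B=(25,5,5,1)$ are exactly the $\curleq$-minimal elements among the sums $\phi(w)+\phi(w')$, invoking Lemma \ref{cer-lem} to translate this into the norm inequality. The only thing left implicit is the full tabulation of the sixteen suffix-sum comparisons, which is precisely the table the paper writes out and which your sample computation shows you know how to complete.
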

\begin{proof}
First we note, that by Lemma \ref{cer-lem} it is enough to show the
statement of the proposition for the minimal vectors in each class. By Lemma
\ref{min-vec} we can list all those pairs belonging to different classes
modulo $M$:
\[ \begin{array}{c|c|r}
i&j&\phi(v_i)+\phi(v_j) \\ \hline
0&1&(2,10,2,10)\\
0&2&{\bf (10,10,2,2)}\\
0&3&(2,10,10,2)\\
0&5&(17,13,5,1)\\
0&6&(17,13,1,5)\\
1&2&(10,2,2,10)\\
\end{array} \quad
\begin{array}{c|c|r}
i&j&\phi(v_i)+\phi(v_j) \\ \hline
1 & 3 &(2,2,10,10)\\
1 & 4 &(17,1,5,13)\\
1 & 6 &(17,5,1,13)\\
2 & 3 &(10,2,10,2)\\
2 & 4 &(25,1,5,5)\\
2 & 5 &{\bf (25,5,5,1)}\\
\end{array} \quad
\begin{array}{c|c|r}
i&j&\phi(v_i)+\phi(v_j) \\ \hline
2 & 6 &{(25,5,1,5)}\\
3 & 4 &(17,1,13,5)\\
3 & 5 &(17,5,13,1)\\
4 & 5 &(32,4,8,4)\\
4 & 6 &(32,4,4,8)\\
5 & 6 &(32,8,4,4)\\
\end{array}\]
There are two minimal 4-tuples  among the $\phi(v_i)+\phi(v_j)$ with respect to the relation $\curleq$.
These are the 4-tuples corresponding to the pairs
$(i,j) \in \{ (0,2),(2,5)\}$.
To see that this is a complete list of minimal pairs, we check that
for all pairs\\
$(i,j) \in \{(0,1), (0,3), (0,6),
(1,2), (1,3), (1,4), (1,6), (2,3)
, (3,4), (4,5), (4,6), (5,6) \}$ we have $(\phi(v_0)+\phi(v_2) )
\prec (\phi(v_i)+\phi(v_j) )$. And for all those pairs of indices\\
$(i,j) \in \{(0, 5), (1, 4), (2, 4), (2,6), (3, 4), (3, 5), (4, 5), (4, 6), (5,
6)\}$ we see that the inequality $(\phi(v_2)+\phi(v_5) )
\prec (\phi(v_i)+\phi(v_j) )$ is satisfied.\\
So we have a complete list of minimal vectors. By Lemma \ref{cer-lem}
the minimum is attained by a minimal pair, which implies the proposition.
\end{proof}

\section{Proof of the Conway-Sloane conjecture}
\begin{theorem}\label{main}
For all real numbers $(a,b,c,d)$ satisfying $0<a<b<c<d$ the lattices
$L^+ \cong L_{1,a,b,c,d}$ and $L^-=L_{2,a,b,c,d}$ are isospectral but not isometric.
\end{theorem}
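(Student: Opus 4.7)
Isospectrality is already established in Remark 2 of Section \ref{TETRA}, so the task reduces to proving non-isometry. Since $\Theta_{1,1,L}$ is an isometry invariant of $L$ by Theorem \ref{thm31}, it suffices to exhibit at least one nonzero coefficient in the $q$-expansion of the discrepancy $\delta(\tau)=\frac{1}{128}\bigl(\Theta_{1,1}(\tau,L_1)-\Theta_{1,1}(\tau,L_2)\bigr)$.

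The plan is to inspect the leading term of this $q$-expansion. By Corollary \ref{delta}, $\delta=\sum_{0\le i<j\le 3}\delta_{[v_i],[v_j]}$, so the smallest $q$-exponent that can occur is the minimum of $\|l\|^2+\|k\|^2$ over all pairs $(l,k)\in[v_i]\times[v_j]$ with $i<j$. By Lemma \ref{cer-lem}, a vector which is not minimal in its coset (with respect to $\prec$) contributes only to strictly larger exponents. Lemma \ref{min-vec} enumerates all minimal vectors, and Proposition \ref{min-pair} then singles out the only pairs that can achieve the overall minimum, namely $(v_0,v_2)$ and $(v_5,v_2)$.

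Next I would compute the two contributions. Using $\Psi(v_0)=g_0(v_0)=v_0$, $\Psi(v_5)=g_1(v_5)$ (since $v_5\in[v_1]$), and $\Psi(v_2)=g_2(v_2)$, together with the diagonal Gram matrix $\diag(a,b,c,d)$, a short difference-of-squares evaluation should give
\[
\langle v_0,v_2\rangle^2-\langle \Psi(v_0),\Psi(v_2)\rangle^2=-12(b-a)(d-c),
\]
\[
\langle v_5,v_2\rangle^2-\langle \Psi(v_5),\Psi(v_2)\rangle^2=-96\,a(c-b).
\]
Both expressions are strictly negative under the hypothesis $0<a<b<c<d$.

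The main subtlety will be that the two candidate exponents $\|v_0\|^2+\|v_2\|^2=10a+10b+2c+2d$ and $\|v_5\|^2+\|v_2\|^2=25a+5b+5c+d$ are incomparable in the partial order $\prec$, so depending on $(a,b,c,d)$ either one may be the smaller, and they may even coincide along a hypersurface in parameter space. The virtue of having both contributions strictly negative is that no cancellation can occur: the leading coefficient of $\delta$ is either one of the two negatives or their sum, and in every case it is strictly negative. Therefore $\delta\not\equiv 0$, so $\Theta_{1,1,L^+}\neq\Theta_{1,1,L^-}$ and the lattices $L^+$ and $L^-$ are not isometric.
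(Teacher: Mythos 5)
Your proposal is correct and follows essentially the same route as the paper: reduce to showing $\delta\not\equiv 0$ via Corollary \ref{delta}, identify the minimal pairs $(v_0,v_2)$ and $(v_2,v_5)$ via Proposition \ref{min-pair}, and compute the two contributions $-12(b-a)(d-c)$ and $-96a(c-b)$, both strictly negative. Your explicit remark that the two candidate exponents may coincide but cannot cancel because both contributions have the same sign is exactly the (slightly implicit) final step of the paper's argument.
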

\begin{proof}
We have seen that both lattices are isospectral in \ref{iso-spec}.
To show that they are not isomorphic it is enough by Theorem \ref{thm31}
to show that $\delta(\tau) = \frac{1}{128} \left(
\Theta_{1,1,L^+}(\tau) -\Theta_{1,1,L^-}(\tau)\right)$ is not zero.
We have seen in Corollary \ref{delta} that 
\[ \delta(\tau) = \sum_{ 0 \leq i < j \leq 3} \delta_{[v_i],[v_j]}(\tau)
\mbox{ with } 
\delta_{[v_i],[v_j]}(\tau) = \sum\limits_{(l,k) \in
[v_i] \times [v_j]}\left( \sca{l}{k}^2-
\langle \Psi(l),\Psi(k) \rangle^2 \right)q^{\|l\|^2+\|k\|^2} .
\]
By Proposition \ref{min-pair} the minimal value of $\|l\|^2 + \|k\|^2$
appearing in one of the $\delta_{[v_i],[v_j]}$ is 
$\min \{\|v_0\|^2+\|v_2\|^2, \|v_5\|^2+ \|v_2\|^2 \}$
and it can be attained only by the pairs
$(v_0,v_2)$ or $(v_2,v_5)$. Now we compute
\[ \begin{array}{rcl}
\sca{v_0}{v_2}^2- \sca{\Psi(v_0)}{\Psi(v_2)}^2 & = & -12(b-a)(d-c) \\
\sca{v_2}{v_5}^2- \sca{\Psi(v_2)}{\Psi(v_5)}^2 & = & -96a(c-b) .\\
\end{array}\]
Since both numbers are negative by our assumption, we conclude that
the coefficient of $q^{\min \{\|v_0\|^2+\|v_2\|^2, \|v_5\|^2+ \|v_2\|^2
\}}$ in $\delta(\tau)$  is negative. In particular it is not zero.
Therefore $\delta(\tau) \not\equiv 0$ which gives the result.
\end{proof}

\subsection*{Acknowledgment}
The authors thank Manuel Blickle for valuable remarks.
This work has been supported by the SFB/TR 45
``Periods, moduli spaces and arithmetic of algebraic varieties''.

\end{document}